\newtheorem{theo}{Theorem}[section]
\newtheorem{lem}[theo]{Lemma}
\newtheorem{prop}[theo]{Proposition}
\numberwithin{equation}{section}
\theoremstyle{definition}
\numberwithin{equation}{section}
\begin{document}

\title{The sum of digits of polynomial values in arithmetic progressions}

\author{Thomas Stoll}
\address{Institut de Math\'ematiques de Luminy, Universit\'e d'Aix-Marseille, 13288 Marseille Cedex 9, France,}
\email{stoll@iml.univ-mrs.fr}

\keywords{Sum of digits, polynomials, Gelfond's problem}

\subjclass[2010]{Primary 11A63; Secondary 11N37, 11N69.}

\date{\today}

\begin{abstract}
  Let $q, m\geq 2$ be integers with $(m,q-1)=1$. Denote by $s_q(n)$ the sum of digits of $n$
  in the $q$-ary digital expansion. Further let $p(x)\in \mathbb{Z}[x]$ be a
  polynomial of degree $h\geq 3$ with $p(\mathbb{N})\subset \mathbb{N}$. We show that there exist $C=C(q,m,p)>0$ and $N_0=N_0(q,m,p)\geq 1$, such that for all 
  $g\in\mathbb{Z}$ and all $N\geq N_0$,
  $$\#\{0\leq n< N: \quad s_q(p(n))\equiv g \bmod m\}\geq C N^{4/(3h+1)}.$$
  This is an improvement over the general lower bound given by Dartyge and Tenenbaum (2006), which is $C N^{2/h!}$.
\end{abstract}

\maketitle

\section{Introduction}

Let $q, m\geq 2$ be integers and denote by $s_q(n)$ the sum of digits of $n$ in the $q$-ary digital expansion of integers. 
In 1967/68, Gelfond~\cite{Ge67} proved that for nonnegative integers $a_1, a_0$ with $a_1\neq 0$, the sequence $\left(s_q(a_1n+a_0)\right)_{n\in \mathbb{N}}$ is well distributed in arithmetic progressions mod $m$, provided $(m,q-1)=1$. At the end of his paper, he posed the problem of finding
the distribution of $s_q$ in arithmetic progressions where the argument is restricted to values of polynomials of degree $\geq 2$. Recently, Mauduit and Rivat~\cite{MR09} answered Gelfond's question in the case of squares.

\begin{theo}[Mauduit \& Rivat (2009)]\label{thmMR09}
For any $q,m\geq 2$ there exists $\sigma_{q,m}>0$ such that for any $g\in\mathbb{Z}$, as $N \to \infty$,
$$\#\{0\leq n< N: \; s_q(n^2)\equiv g \bmod m\}= \frac{N}{m} \; Q(g,d)+O_{q,m}(N^{1-\sigma_{q,m}}),$$
where $d=(m,q-1)$ and $$Q(g,d)=\#\{0\leq n< d: \; n^2\equiv g \bmod d\}.$$
\end{theo}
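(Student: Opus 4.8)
The plan is to deduce the asymptotic from a power‑saving estimate for exponential sums attached to $s_q$. Set $\Sigma(\alpha,N)=\sum_{0\le n<N}e(\alpha\,s_q(n^2))$. By orthogonality of additive characters modulo $m$,
$$\#\{0\le n<N:\ s_q(n^2)\equiv g\bmod m\}=\frac1m\sum_{j=0}^{m-1}e(-jg/m)\,\Sigma(j/m,N).$$
The terms with $j$ a multiple of $m/d$ (equivalently $m\mid j(q-1)$) give the main term: writing $j=(m/d)i$ with $0\le i<d$, one has $e((j/m)s_q(n^2))=e((i/d)n^2)$ since $s_q(n^2)\equiv n^2\bmod(q-1)$ and $d\mid q-1$, and evaluating $\sum_{n<N}e(in^2/d)$ by grouping $n$ into residue classes mod $d$ yields exactly $\frac Nm\,Q(g,d)+O(1)$. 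For all other $j$ we have $(q-1)(j/m)\notin\mathbb Z$, so it suffices to prove $|\Sigma(\alpha,N)|\ll_q N^{1-\sigma(\alpha)}$ for each of the finitely many such $\alpha=j/m$, with $\sigma(\alpha)>0$; then $\sigma_{q,m}:=\min_j\sigma(j/m)$ works.

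To estimate $\Sigma(\alpha,N)$ I would run the van der Corput / digit‑surgery method. For $n<N\le q^\nu$ we have $n^2<q^{2\nu}$, and a single van der Corput step with shift range $R$ gives
$$|\Sigma(\alpha,N)|^2\ll\frac{N^2}{R}+\frac NR\sum_{1\le|r|\le R}\Big|\sum_{n}e\big(\alpha\,(s_q((n+r)^2)-s_q(n^2))\big)\Big|;$$
the key structural gain is that $(n+r)^2-n^2=2rn+r^2$ is \emph{linear} in $n$, i.e.\ we are now in Gelfond's setting. Using $q$‑multiplicativity one factors $e(\alpha\,s_q(\cdot))$ across disjoint blocks of digits; a \emph{carry‑propagation lemma} --- adding a number that fits below level $\mu\gtrsim\log_q(RN)$ to $n^2$ fails to freeze the digits of $n^2$ at levels $\ge\mu+\lambda$ only for an exceptional set of $n$ of size $\ll Nq^{-\lambda}$, since a carry rarely survives a run of $\lambda$ consecutive digits --- then reduces the inner sum, up to admissible errors, to sums of a truncated $q$‑multiplicative function $f_\lambda(t)=e(\alpha\,s_q(t\bmod q^\lambda))$ whose argument is driven by the linear form $2rn+r^2$. (Bounding the exceptional set uses equidistribution of $n^2$ in residues modulo $q^{\mu+\lambda}$, valid while $q^{\mu+\lambda}$ stays below a fixed power of $N$.)

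What remains is a quantitative, truncated version of Gelfond's theorem: cancellation in $\sum_n f_\lambda(\text{linear form in }n)$. Expanding $f_\lambda$ in its discrete Fourier series $f_\lambda(t)=\sum_{|h|\le q^\lambda/2}c_\lambda(h)\,e(ht/q^\lambda)$, the essential analytic input is that when $(q-1)\alpha\notin\mathbb Z$ the Fourier mass is diffuse: there is $\eta=\eta_q(\alpha)<1$ with $\sum_h|c_\lambda(h)|\ll q^{\eta\lambda}$, uniformly over the finitely many $\alpha$ in play. (For $(q-1)\alpha\in\mathbb Z$ this fails --- $f_\lambda(t)=e(\alpha t)$ is a single mode --- which is why the hypothesis enters here.) Each resulting sum $\sum_n e(h\cdot(2rn+r^2)/q^\lambda)$ is geometric, hence $\ll\min(N,\|2hr/q^\lambda\|^{-1})$; summing over $n$, against $\sum_h|c_\lambda(h)|\ll q^{\eta\lambda}$ in $h$, and over $1\le|r|\le R$ via the standard bound for $\sum_{1\le h<q^\lambda,\,1\le r\le R}\min(N,\|2hr/q^\lambda\|^{-1})$, produces genuine cancellation. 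Choosing $R$, $\mu$, $\lambda$ as suitable fixed powers of $N$ then gives $|\Sigma(\alpha,N)|\ll_q N^{1-\sigma(\alpha)}$.

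The main obstacle is to make every ingredient pay off simultaneously: one must select $\mu$, $\lambda$ and $R$ so that (i) $q^{\mu+\lambda}$ is small enough that $n^2$ equidistributes modulo $q^{\mu+\lambda}$ and the top digits are genuinely frozen; (ii) $\lambda$ is large enough to make the carry exceptional set $Nq^{-\lambda}$ negligible while $\mu$ is large enough to absorb the shifted quadratic below level $\mu$; and (iii) the unavoidable loss $q^{\eta\lambda}$ from the $L^1$‑bound on the truncated $q$‑multiplicative function is outweighed by the cancellation extracted from the linear exponential sums over $n$ and over $r$. These requirements pull the parameters in opposite directions, and the quantitative Fourier estimate for truncations of $e(\alpha\,s_q(\cdot))$ --- keeping $\eta_q(\alpha)$ bounded away from $1$ uniformly --- is the technical heart of the whole argument.
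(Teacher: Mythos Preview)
The paper does not prove this theorem at all: Theorem~\ref{thmMR09} is quoted from Mauduit and Rivat~\cite{MR09} as background, with the remark that ``the Fourier-analytic approach, as put forward in~\cite{MR10} and~\cite{MR09}'' handles the quadratic case. There is therefore no proof in the present paper to compare your attempt against.

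That said, your sketch is a reasonable high-level summary of the Mauduit--Rivat strategy (orthogonality, isolation of the main term via $s_q(n)\equiv n\pmod{q-1}$, a van der Corput step to linearise, a carry-propagation lemma, and Fourier estimates for truncations of $e(\alpha s_q(\cdot))$). As a full proof it is incomplete: the actual argument in~\cite{MR09} requires iterating the van der Corput/Weyl differencing rather than a single step, and the decisive analytic input is not merely an $L^1$-bound $\sum_h|c_\lambda(h)|\ll q^{\eta\lambda}$ but a sharper hybrid $L^1$/$L^\infty$ control on the discrete Fourier transform of the truncated function; without this refinement the parameters cannot be balanced as in your final paragraph. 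Your identification of the tension between (i)--(iii) is accurate, but turning the sketch into a proof is precisely the substantial work carried out in~\cite{MR09}.
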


The proof can be adapted to values of general quadratic polynomial instead of squares. We refer the reader to~\cite{MR10} and~\cite{MR09} for detailed references and further historical remarks. The case of polynomials of higher degree remains elusive so far. The Fourier-analytic approach, as put forward in~\cite{MR10} and~\cite{MR09}, seems not to yield results of the above strength. In a recent paper, Drmota, Mauduit and Rivat~\cite{DMR11} applied the Fourier-analytic method to show that well distribution in arithmetic progressions is obtained whenever $q$ is sufficiently large.

In the sequel, and unless otherwise stated, we write
$$p(x)=a_h x^h+\cdots+a_0$$ for an arbitrary, but fixed polynomial $p(x)\in\mathbb{Z}[x]$ of degree $h\geq 3$ with $p(\mathbb{N})\subset \mathbb{N}$.

\begin{theo}[Drmota, Mauduit \& Rivat (2011)]
  Let $$q\geq \exp(67 h^3 (\log h)^2)$$ be a sufficiently large prime number and suppose $(a_h,q)=1$. Then there exists $\sigma_{q,m}>0$ such that for any $g\in\mathbb{Z}$, as $N \to \infty$,
$$\#\{0\leq n< N: \; s_q(p(n))\equiv g \bmod m\}= \frac{N}{m} \; Q^\star(g,d)+O_{q,m,p}(N^{1-\sigma_{q,m}}),$$
where $d=(m,q-1)$ and $$Q^\star(g,d)=\#\{0\leq n< d: \; p(n)\equiv g \bmod d\}.$$
\end{theo}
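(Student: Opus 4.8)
The plan is to adapt the Fourier-analytic machinery of Mauduit and Rivat behind Theorem~\ref{thmMR09} to polynomials of degree $h\ge3$, the new input — and the source of the large-$q$ hypothesis — being a Weyl-type treatment of the polynomial argument by iterated differencing. \emph{Step 1: orthogonality and the main term.} By orthogonality of the additive characters modulo $m$,
$$\#\{0\le n<N:\ s_q(p(n))\equiv g\bmod m\}=\frac1m\sum_{j=0}^{m-1}e\Big(\frac{-jg}{m}\Big)\,S_j(N),\qquad S_j(N):=\sum_{0\le n<N}e\Big(\frac{j}{m}s_q(p(n))\Big).$$
The term $j=0$ contributes $N/m$. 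Since every power of $q$ is $\equiv1$ modulo $d=(m,q-1)$, one has $s_q(n)\equiv n\pmod d$, so for the $d$ indices $j$ that are multiples of $m/d$ the weight reduces to $e(\tfrac{j}{m}p(n))$; summing those and applying orthogonality modulo $d$ (using that $n\mapsto\mathbbm 1[p(n)\equiv g\bmod d]$ is $d$-periodic) produces exactly $\frac{N}{m}Q^\star(g,d)+O(1)$. It remains to prove a power-saving bound $S_j(N)\ll_{q,m,p}N^{1-\sigma_{q,m}}$ for every $j$ with $(m/d)\nmid j$, i.e. for $\alpha:=j/m$ with $\|(q-1)\alpha\|>0$; fixing such an $\alpha$ we also have $\|\alpha\|\ge1/m$, so Gelfond's basic estimate on the $q$-multiplicative weight $e(\alpha s_q(\cdot))$ is available with an explicit per-digit gain.

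\emph{Step 2: truncation and reduction to a window of digits.} Set $\nu:=\lfloor\log_q p(N)\rfloor+1\asymp h\log_qN$, so $0\le p(n)<q^\nu$ for all $n<N$, and choose cut positions $\mu<\mu+\lambda\le\nu$ with $\lambda$ of size $\delta\log_qN$ and $\mu$ bounded away from both ends. Decompose $p(n)$ into its low block ($<q^\mu$), its middle block (digits $\mu,\dots,\mu+\lambda-1$) and its high block. Using the Mauduit–Rivat ``carry lemma'' — for shifts $r_1,\dots$ of bounded size, the number of $n<N$ for which the middle block of $p(n),p(n+r_1),\dots$ is disturbed by a carry arriving from below or above is $O(Nq^{-c\lambda})$ plus the $O(N^{1-1/h})$ count of $n$ with $p(n)$ ``too small'' — one replaces $e(\alpha s_q(p(n)))$, up to an admissible error, by a weight $e\big(\alpha\,\widetilde s(p(n))\big)$ depending only on the middle block. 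The point of putting the block in the middle is that it stays carry-stable under the shifts introduced next.

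\emph{Step 3: Weyl differencing in the polynomial variable.} Apply van der Corput's inequality $h-1$ times, with shift variables $r_1,\dots,r_{h-1}$ each of size $R=N^{\eta}$. After the $k$-th step the relevant object is a difference of middle-block digit sums of $p$ at $n$ and at shifted arguments; on the complement of the carry-exceptional set this difference is the middle-block digit sum of the finite-difference polynomial $\Delta_{r_1}\cdots\Delta_{r_k}p$, of degree $h-k$ with leading coefficient $a_h\,h(h-1)\cdots(h-k+1)\,r_1\cdots r_k$. At $k=h-1$ the inner argument is \emph{linear}, $an+b$ with $a=h!\,a_h\,r_1\cdots r_{h-1}$, and the inner sum factorises over $q$-adic blocks into Gauss-type sums $\sum_{0\le u<q}e(\alpha\,\varepsilon_i(au+b))$; when $(a,q)=1$ — arranged by taking $q$ prime and $q>h!|a_h|R^{h-1}$ — each is $\le\frac{1}{2\|\alpha\|}\le m/2<q$ in modulus, a genuine per-block saving. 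Collecting the savings over the $\asymp\lambda$ blocks, summing the exceptional contributions back in, and optimising $\lambda=\delta\log_qN$, $\eta$ and $\mu$ in terms of $q$ and $h$ gives $S_j(N)\ll N^{1-\sigma_{q,m}}$ with $\sigma_{q,m}>0$.

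\emph{Main obstacle.} The crux is Step 3: because $s_q$ does not respect addition, the identification of $s_q(p(n+r))-s_q(p(n))$ with the digit sum of the difference polynomial is valid only off an exceptional set, and two competing requirements — that $(h!\,a_h\,r_1\cdots r_{h-1},q)=1$ for the terminal Gauss sums to contract, and that the short-interval Weyl/Vinogradov bound used to absorb the exceptional sums (whose power saving is only of order $(h^2\log h)^{-1}$) overcome the combinatorial loss accumulated through the $h-1$ differencing steps — are what force $q$ to be a prime with $q\ge\exp(67h^3(\log h)^2)$. For small $q$ this balance fails, which is exactly why the quadratic case (Theorem~\ref{thmMR09}, where a single differencing suffices and no largeness of $q$ is needed) is the only one settled unconditionally.
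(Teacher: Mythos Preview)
The paper does not contain a proof of this statement. The theorem is quoted as a result of Drmota, Mauduit and Rivat~\cite{DMR11} and is included purely as background; no argument for it is given or even sketched in the present paper. There is therefore nothing in the paper to compare your proposal against.

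For what it is worth, your outline is a reasonable high-level description of the Fourier-analytic strategy one would expect in~\cite{DMR11}: orthogonality to isolate the main term, truncation of $s_q$ to a sliding window of digits combined with a carry lemma, and iterated van der Corput/Weyl differencing to linearise the polynomial argument, with the large-$q$ hypothesis arising from the need to balance the Vinogradov-type savings against the losses from $h-1$ differencing steps. Whether every detail (the precise form of the carry lemma after several differencings, the exact mechanism producing the constant $67h^3(\log h)^2$, the handling of the main term when $d>1$) matches the published proof in~\cite{DMR11} cannot be judged from the paper under review, which simply cites the result.

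Note also that the paper's own contribution, Theorem~\ref{mtheo}, is proved by an entirely different and elementary method: an explicit construction of integers $n=t(q^k)$ for a carefully chosen cubic $t$, exploiting the splitting formulas of Proposition~\ref{split} to control $s_q(p(n))$ directly. That argument has no overlap with the Fourier-analytic machinery you describe.
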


It seems impossible to even find a single ``nice'' polynomial of degree $3$, say, that allows to conclude for well distribution in arithmetic progressions for small bases, let alone that the binary case $q=2$ is an emblematic case. Another line of attack to Gelfond's problem is to find lower bounds that are valid for all $q\geq 2$. Dartyge and Tenenbaum~\cite{DT06} provided such a general lower bound by a method of descent on the degree of the polynomial and the estimations obtained in~\cite{DT05}.

\begin{theo}[Dartyge \& Tenenbaum (2006)]
  Let $q,m \geq 2$ with $(m,q-1)=1$. Then there exist $C=C(q,m,p)>0$ and $N_0=N_0(q,m,p)\geq 1$, such that for all $g\in\mathbb{Z}$ and all $N\geq N_0$,
  $$ \#\{0\leq n< N: \quad s_q(p(n))\equiv g \bmod m\}\geq C N^{2/h!}.$$
\end{theo}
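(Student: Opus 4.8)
The plan is to reduce the statement to a question about the $q$-ary digits of $p(n)$ for $n$ in a digital family, and to run an induction on $h=\deg p$ — a descent of a different kind from the degree-descent of Dartyge and Tenenbaum — whose base case is Gelfond's theorem for linear polynomials (available to us because $(m,q-1)=1$) and whose inductive step rests on the following observation. We may assume $a_h>0$; take $n_0$ large and $k$ large, with $n_0<q^{k}$, and for $t$ in a range $[0,T)$ put $n=n_0+t\,q^{k}$. The exact Taylor identity
\[
  p(n_0+t\,q^{k})=\sum_{i=0}^{h}\frac{p^{(i)}(n_0)}{i!}\,t^{i}\,q^{ik}=Q(t)+a_h\,t^{h}\,q^{hk},
  \qquad Q(t):=\sum_{i=0}^{h-1}\frac{p^{(i)}(n_0)}{i!}\,t^{i}\,q^{ik},
\]
expresses $p(n)$ as a sum of two pieces, the crucial point being that $Q$ is a polynomial in $t$ of degree $h-1$ whose coefficients are positive (for $n_0$ large) and which maps $\mathbb{N}$ into $\mathbb{N}$. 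If $k$ is chosen large enough that the digit band occupied by $a_h t^{h}q^{hk}$ is disjoint from the band occupied by $Q(t)$ for all $t<T$, then no carries cross that cut and
\[
  s_q\big(p(n_0+t\,q^{k})\big)=s_q\big(Q(t)\big)+s_q\big(a_h\,t^{h}\big).
\]

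For the inductive step one wants, for each residue class $g$, many $t<T$ with $s_q(Q(t))+s_q(a_h t^{h})\equiv g\pmod m$, knowing (inductive hypothesis, since $\deg Q=h-1$) that $s_q(Q(t))\equiv g_1\pmod m$ already has $\gg T^{\,e_{h-1}}$ solutions $t<T$ for every $g_1$. The difficulty — and this is the main obstacle — is that $s_q(Q(t))$ and $s_q(a_h t^{h})$ are coupled through the common variable $t$, so one cannot simply multiply densities; the leftover piece $a_h t^{h}$ is itself a degree-$h$ monomial, exactly the kind of term whose sum of digits resists control. The way out is to apply the same digital splitting to $a_h t^{h}$: writing $t=t_0+s\,q^{j}$ with $t_0$ fixed and confining $s$ to a sub-progression, $a_h t^{h}$ decomposes into a constant block, one block that is \emph{linear} in $s$ (to which Gelfond applies again), and a remainder of strictly lower ``effective degree'' confined to a fixed set of digit positions, while $Q(t_0+s q^{j})$ remains a polynomial in $s$ of degree $h-1$ to which the inductive hypothesis applies on the $s$-interval. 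Chasing how many digit positions each ingredient consumes, under the single global constraint $n<N$, i.e. $k+\log_q T\le K$ (with $N=q^{K}$), one obtains a recursion of the shape $1/e_h\le 1/e_{h-1}+3/4$ with base value $e_1=1$; this solves to $e_h=4/(3h+1)$.

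Assembling the induction then gives, for each $g$, at least $\gg N^{4/(3h+1)}$ integers $n<N$ with $s_q(p(n))\equiv g\pmod m$, the implied constant depending only on $q,m,p$; the finitely many small $N$ and the rounding in ``$N=q^{K}$'' are absorbed into $C$ and $N_0$. Besides the coupling issue just described, the technical points to watch are that the block-separation conditions must be made fully explicit (so that no carries leak between any of the bands, including those inside $a_h t^{h}$), and that when $q$ shares prime factors with $h!\,a_h$ the $q$-adic valuations of the coefficients $p^{(i)}(n_0)/i!$ need not behave as in the coprime case, so that $n_0$, and possibly a sub-base $q^{r}$, must be chosen with care.
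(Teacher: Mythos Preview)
Two preliminary observations. First, the paper does not prove the Dartyge--Tenenbaum theorem at all; it is quoted as prior work and then superseded by the paper's own Theorem~\ref{mtheo}. Second, your proposal does not in fact target the stated bound $N^{2/h!}$: you assert the recursion $1/e_h\le 1/e_{h-1}+3/4$ and hence the exponent $4/(3h+1)$, which is precisely the content of Theorem~\ref{mtheo}. So the meaningful comparison is with the paper's proof of \emph{that} result.

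The paper's argument is entirely non-inductive. It fixes the cubic $t(x)=m_3x^3+m_2x^2-m_1x+m_0$ with four free integer parameters, each ranging over an interval of length $\asymp q^{u}$, and proves (Lemma~\ref{lemtechn}) that for every $l\ge1$ the power $t(x)^{l}$ has all coefficients positive \emph{except} the coefficient of $x^{1}$. Evaluating $p(t(q^{k}))$ with $k$ large enough to separate the blocks, the splitting formulas of Proposition~\ref{split} give $s_q\bigl(p(t(q^{k}))\bigr)=k(q-1)+M$ with $M$ independent of $k$; since $(m,q-1)=1$, running $k$ over $m$ consecutive values hits every residue class mod $m$. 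The four parameters furnish $\gg q^{4u}$ distinct $n=t(q^{k})$, each of size $\ll q^{(3h+1)u}$, and the exponent $4/(3h+1)$ falls out directly. There is no coupling to untangle: the single engineered negative coefficient is what makes $k$ appear linearly in the digit sum.

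Your inductive scheme, by contrast, has a genuine gap at exactly the spot you flag as the main obstacle. Writing $t=t_0+sq^{j}$ does \emph{not} lower the degree of the uncontrolled piece: expanding $a_h(t_0+sq^{j})^{h}$ one gets, beyond the constant and linear blocks, the terms $\sum_{i=2}^{h}\binom{h}{i}a_h t_0^{\,h-i}s^{i}q^{ij}$, and the top one is $a_h s^{h}q^{hj}$ --- again a degree-$h$ monomial, now in the new running variable $s$, with $s_q(a_h s^{h})$ coupled to $s$ in precisely the way you were trying to escape. This remainder is neither of lower (effective or otherwise) degree nor confined to a fixed band of digits once $s$ varies, so the step meant to yield $1/e_h\le 1/e_{h-1}+3/4$ does not close. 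A second, independent difficulty is that the inductive hypothesis cannot be applied to $Q(t)=\sum_{i<h}\frac{p^{(i)}(n_0)}{i!}\,t^{i}q^{ik}$ as a black box: its coefficients depend on $k$, hence so do the constants $C(Q)$ and $N_0(Q)$ supplied by the hypothesis, and these cannot be absorbed into constants depending only on $q,m,p$.
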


The aim of the present work is to improve this lower bound for all $h\geq 3$. More importantly, we get a substantial improvement of the bound as a function of $h$. The main result is as follows.\footnote{Gelfond's work and Theorem~\ref{thmMR09} give precise answers for linear and quadratic polynomials, so we do not include the cases $h=1, 2$ in our statement though our approach works without change.}

\begin{theo}\label{mtheo}
  Let $q,m \geq 2$ with $(m,q-1)=1$. Then there exist $C=C(q,m,p)>0$ and $N_0=N_0(q,m,p)\geq 1$, such that for all $g\in\mathbb{Z}$ and all $N\geq N_0$,
  $$\#\{0\leq n< N: \quad s_q(p(n))\equiv g \bmod m\}\geq C N^{4/(3h+1)}.$$
  Moreover,
  for monomials $p(x)=x^h$, $h\geq 3$, we can take
  \begin{align*}
      N_0 &=q^{3(2h+m)}\left(2hq^2\left(6q\right)^h\right)^{3h+1},\\
      C&=\left(16 hq^5 \left(6q\right)^h\cdot q^{(24h+12m)/(3h+1)}\right)^{-1}.
  \end{align*}
\end{theo}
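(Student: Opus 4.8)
The plan is to exploit the near-linear behaviour of $s_q$ on suitably chosen sparse arithmetic progressions of inputs. The key observation is that if $n$ ranges over a progression of the form $n = n_0 + q^{t_1} u_1 + q^{t_2} u_2 + \cdots$ with the exponents $t_i$ spaced far enough apart and the digit-blocks $u_i$ small, then the digits of $p(n)$ split into essentially independent ``windows'' around each $q^{t_i}$, and within each window the contribution to $s_q(p(n))$ varies by a controlled amount as the $u_i$ vary. More precisely, I would fix a scale parameter and write $n = a + q^T k$ where $a$ is a fixed small ``seed,'' $T$ is large, and $k$ runs over $0 \le k < K$ with $K \approx N/q^T$; expanding $p(a+q^T k) = p(a) + q^T k\, p'(a) + q^{2T} \binom{k}{\cdot}(\cdots) + \cdots + q^{hT} a_h k^h$, the terms live in disjoint digit ranges provided $T$ exceeds the number of digits of $k^h$ plus $\log_q$ of the coefficients, i.e. roughly $T \gtrsim h \log_q K + O_p(1)$. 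Under this separation, $s_q(p(a+q^T k)) = s_q(p(a)) + s_q(q^T k p'(a)) + \cdots$, a clean sum of sum-of-digits of lower-complexity quantities, so one reduces the degree-$h$ problem to controlling $s_q$ along linear and lower-degree pieces.

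The main step is then a pigeonhole/transfer argument: I want to show that as $k$ varies, the residue $s_q(p(a+q^T k)) \bmod m$ takes many values, enough to cover all of $\mathbb{Z}/m\mathbb{Z}$ with positive density. For the monomial case $p(x)=x^h$ one has $p(a+q^Tk) = \sum_{j=0}^h \binom{h}{j} a^{h-j} q^{Tj} k^j$, and choosing $a$ cleverly (for instance $a$ a power of $q$, or $a$ chosen so that the top digits of $a^h$ are controlled) lets one isolate the term $q^{hT}k^h$; one then iterates, descending on $k$ in the same way, until the exponent of $k$ is $1$ or $2$, at which point Gelfond's theorem (or the trivial fact that $s_q$ of a linear progression is equidistributed mod $m$ when $(m,q-1)=1$) finishes the job. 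Counting the surviving $k$'s at each stage of the descent, each step costs a factor of roughly $q^{-h}$ in the exponent (we keep a $q^{-T}$-fraction and $T \approx h\log_q K$), which after unwinding the recursion gives a final count $\gg N^{4/(3h+1)}$; the exponent $4/(3h+1)$ and the explicit constants $N_0, C$ in the theorem come from optimising the number of descent steps against the spacing requirement $T \gtrsim h\log_q K$ and the extra loss from the carry-control at the window boundaries.

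The hard part will be handling the \emph{carries} that propagate across the boundaries between the digit windows: the factorisation $s_q(p(a+q^Tk)) = \sum_i s_q(\text{block}_i)$ is only exact when no carry escapes a block, and for a general $k$ there will be carries. I expect to deal with this by restricting $k$ to a subprogression where the relevant partial sums $\binom{k}{j}a_h(\cdots)$ have a guaranteed block of zero digits at the top (a ``buffer zone''), which forces carry-freeness; this restriction is what forces $T$ to be slightly larger than the naive $h\log_q K$ and contributes the $+1$ in the denominator $3h+1$ rather than $3h$. A secondary technical point is making the choice of the seed $a$ (and, in the iterated version, the nested seeds) explicit and uniform in the target residue $g$ — since the statement must hold for all $g\in\mathbb{Z}$ simultaneously with the same $C$ and $N_0$, I would arrange the descent so that the set of attainable residues is an entire coset-union of a subgroup of $\mathbb{Z}/m\mathbb{Z}$, and then invoke $(m,q-1)=1$ to see that this subgroup is everything. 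For the general (non-monomial) polynomial, the same scheme applies after replacing $a^{h-j}$-coefficients by the Taylor coefficients $p^{(j)}(a)/j!$ and checking that the leading term $a_h q^{hT} k^h$ still dominates the appropriate window; the only change is in the bookkeeping of the constants, which is why the explicit $N_0$ and $C$ are stated only for $p(x)=x^h$.
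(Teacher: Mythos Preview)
Your proposal describes, in essence, the Dartyge--Tenenbaum descent: write $n=a+q^{T}k$, Taylor-expand $p(n)$ into blocks living at scales $q^{jT}$, split $s_q$ accordingly, and reduce the degree-$h$ problem to lower-degree pieces in $k$. That scheme is sound, but its output is the bound $N^{2/h!}$ that the present theorem is meant to \emph{improve}. Nowhere in your outline do you derive the exponent $4/(3h+1)$: the sentence ``each step costs a factor of roughly $q^{-h}$ in the exponent \ldots\ which after unwinding the recursion gives a final count $\gg N^{4/(3h+1)}$'' is an assertion, not a computation, and in fact the numbers $4$ and $3h+1$ do not arise from any step you have written down. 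A single step of your construction with $T\approx h\log_q K$ gives $n\lesssim K^{h+1}$, hence at best $\gg N^{1/(h+1)}$ admissible $n$, which is already weaker than $N^{4/(3h+1)}$; iterating the descent on $k$ compounds the loss rather than repairing it.

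The paper does something structurally different and does \emph{not} descend on the degree. It fixes a cubic
\[
t(x)=m_3x^{3}+m_2x^{2}-m_1x+m_0
\]
with four integer parameters $m_0,m_2,m_3\asymp q^{u}$ and $m_1$ small but positive, and evaluates $p$ at $n=t(q^{k})$. The key lemma is that for every $l\ge 1$ the power $t(x)^{l}$ (hence also $p(t(x))=\sum a_l t(x)^l$) has all coefficients positive \emph{except} the coefficient of $x^{1}$, which is negative. Consequently, with $k$ large enough to separate the $3h+1$ coefficients of $p(t(x))$, the splitting formulas for $s_q$ give
\[
s_q\bigl(p(t(q^{k}))\bigr)=k(q-1)+M,
\]
where $M$ depends only on $(m_0,m_1,m_2,m_3)$ and the single term $k(q-1)$ comes from the unique borrow caused by the lone negative coefficient. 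Since $(m,q-1)=1$, letting $k$ run over $m$ consecutive integers hits every residue class. The exponent now falls out of pure bookkeeping: four free parameters of size $q^{u}$ give $\asymp q^{4u}$ values of $n$, while $n=t(q^{k})\asymp q^{u}\cdot q^{3k}$ with $k\asymp hu$ forces $n\lesssim q^{(3h+1)u}$; hence $\gg N^{4/(3h+1)}$.

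So the missing idea in your proposal is precisely this cubic-with-one-negative-coefficient construction, which replaces the recursive degree reduction by a one-shot evaluation and manufactures the linear-in-$k$ term $k(q-1)$ that drives equidistribution. Your carry-control discussion and the coset/subgroup heuristic are not wrong in spirit, but they are attached to the wrong skeleton; as written, the argument cannot reach $4/(3h+1)$.
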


The proof is inspired from the constructions used in~\cite{HLS11} and~\cite{HLS11pre} that were helpful in the proof of a conjecture of Stolarsky~\cite{St78} concerning the pointwise distribution of $s_q(p(n))$ versus $s_q(n)$. As a drawback of the method of proof, however, it seems impossible to completely eliminate the dependency on $h$ in the lower bound.

\section{Proof of Theorem~\ref{mtheo}}

Consider the polynomial 
\begin{equation}\label{goodt}
  t(x)=m_3 x^3+m_2 x^2-m_1 x+m_0,
\end{equation}  
where the parameters $m_0,m_1,m_2, m_3$ are positive real numbers that will be chosen later on in a suitable way. For all integers $l\geq 1$ we write
\begin{equation}\label{Tpower}
   T_l(x)=t(x)^l=\sum_{i=0}^{3l} c_i x^i
\end{equation}  
to denote its $l$-th power. (For the sake of simplicity we omit to mark the dependency on $l$ of the coefficients $c_i$.) The following technical result is the key in the proof of Theorem~\ref{mtheo}. It shows that, within a certain degree of uniformity in the parameters $m_i$, all coefficients \textit{but one} of $T_l(x)$ are positive.

\begin{lem}\label{lemtechn}
  For all integers $q\geq 2$, $l\geq 1$ and $m_0, m_1, m_2, m_3 \in \mathbb{R}^+$ with $$1\leq m_0, m_2, m_3 <q, \qquad 0<m_1<l^{-1} (6q)^{-l}$$ we have that $c_i >0$ for $i=0,2,3,\ldots,3l$ and $c_i<0$ for $i=1$. Moreover, for all $i$,
  \begin{equation}\label{coeffbound}
    |c_i|\leq (4q)^l.
  \end{equation} 
\end{lem}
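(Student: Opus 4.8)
The idea is to expand $T_l(x) = t(x)^l$ using the multinomial theorem and track the contribution of the single "bad" term $-m_1 x$. Write $t(x) = A(x) - m_1 x$ where $A(x) = m_3 x^3 + m_2 x^2 + m_0$ has only nonnegative coefficients, all of which (for the nonzero ones) lie in $[1, q)$. Then
$$T_l(x) = \sum_{j=0}^{l} \binom{l}{j} A(x)^{l-j} (-m_1 x)^j = \sum_{j=0}^{l} (-1)^j \binom{l}{j} m_1^j x^j A(x)^{l-j}.$$
The $j=0$ term $A(x)^l$ is a polynomial with nonnegative integer-ish coefficients; I would first check that in $A(x)^l$ the coefficients of $x^i$ are strictly positive for $i = 0, 3, 6, \ldots$ and more generally for every $i$ that is expressible as a sum of $l$ terms each from $\{0,2,3\}$ — which, one checks, is every $i \in \{0\} \cup \{2, 3, \ldots, 3l\}$, with $i=1$ being the unique exception (since $1$ cannot be written as such a sum). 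So the coefficient $c_1$ in $A(x)^l$ alone is $0$, and for all other admissible $i$ the coefficient of $x^i$ in $A(x)^l$ is $\geq 1$ (in fact an integer combination of products of the $m_i$'s, each such product being $\geq 1$).

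Next I would bound the total perturbation coming from the $j \geq 1$ terms. Each such term has coefficients bounded in absolute value by $\binom{l}{j} m_1^j \cdot \|A(x)^{l-j}\|$, where $\|\cdot\|$ denotes the largest coefficient. Since each coefficient of $A(x)^{l-j}$ is at most $(3q)^{l-j}$ (crude multinomial bound: at most $3^{l-j}$ monomials each of size $< q^{l-j}$), summing over $j$ gives a total perturbation of at most $\sum_{j\geq 1} \binom{l}{j} m_1^j (3q)^{l-j} \leq (3q + l m_1)^l - (3q)^l$, which under the hypothesis $m_1 < l^{-1}(6q)^{-l}$ is far smaller than $1$. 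Hence for every admissible $i \neq 1$, the coefficient $c_i$ of $x^i$ in $T_l(x)$ satisfies $c_i \geq 1 - (\text{something} \ll 1) > 0$, while $c_1$ equals $0$ (from $A(x)^l$) minus $l m_1 \cdot (\text{coeff of } x^0 \text{ in } A(x)^{l-1}) + (\text{higher-}j) $, whose dominant contribution $-l m_1 m_0^{l-1}$ is strictly negative and the remaining terms are too small to overturn the sign; so $c_1 < 0$. Finally, the uniform bound \eqref{coeffbound} follows from $|c_i| \leq (3q)^l + [(3q + lm_1)^l - (3q)^l] \leq (3q+1)^l \leq (4q)^l$.

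The main obstacle is the case $i = 1$: one must verify that the negative contribution genuinely dominates, i.e. that the sign is controlled and not accidentally cancelled. This requires isolating the leading term $-l m_1 m_0^{l-1} x$ (the $j=1$ term's lowest-degree part) and showing all other contributions to $c_1$ — namely the $j \geq 2$ terms and the non-constant low-order structure of $A(x)^{l-1}$ — are strictly smaller in absolute value. Since $m_0 \geq 1$, we have $l m_1 m_0^{l-1} \geq l m_1$, while the $j\geq 2$ contributions to $c_1$ are bounded by $\sum_{j\geq 2}\binom{l}{j} m_1^j (3q)^{l-j} \leq l^2 m_1^2 (3q)^{l} \cdot(\text{const})$, which is smaller than $l m_1$ precisely because $m_1 (3q)^l l \leq (6q)^{-l}(3q)^l = 2^{-l} < 1$; also the $j=1$ term contributes only positively-or-zero to coefficients of $x^i$ with $i\geq 2$ coming from $A(x)^{l-1}$, so no cancellation helps $c_1$ become nonnegative. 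Threading this inequality carefully — keeping every constant explicit enough to land inside the stated ranges — is the delicate part; everything else is a routine multinomial estimate.
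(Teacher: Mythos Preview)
Your approach is essentially the same as the paper's: the decomposition $t(x)=A(x)-m_1x$ with $A(x)=m_3x^3+m_2x^2+m_0$, the binomial expansion in powers of $(-m_1x)$, the observation that every $i\in\{0\}\cup\{2,\dots,3l\}$ occurs in $A(x)^l$ with coefficient $\ge 1$, and the bound on the $j\ge 1$ perturbation via $(3q)^{l-j}$ are all exactly what the paper does.

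Two small remarks. First, your binomial identity should read $\sum_{j\ge 1}\binom{l}{j}m_1^{\,j}(3q)^{l-j}=(3q+m_1)^l-(3q)^l$, not $(3q+lm_1)^l-(3q)^l$; your version is still an upper bound, but the clean estimate $(3q+m_1)^l-(3q)^l\le lm_1(3q+1)^{l-1}<(6q)^{-l}(6q)^{l-1}<1$ then follows immediately. Second, and more to the point, you work much harder than necessary for $c_1$. The term $(-1)^j\binom{l}{j}m_1^{\,j}x^jA(x)^{l-j}$ has minimum degree $j$, so for $j\ge 2$ it contributes \emph{nothing} to $c_1$; likewise the $j=1$ term contributes only through the constant term of $A(x)^{l-1}$. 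Hence $c_1=-lm_1m_0^{\,l-1}$ \emph{exactly}, and $c_1<0$ is immediate. The ``main obstacle'' you identify is in fact the trivial case; there is no cancellation to control. The paper's bound $|c_i|\le(4q)^l$ is obtained by comparing with $(qx^3+qx^2+qx+q)^l$ and summing coefficients, but your route via $(3q+1)^l\le(4q)^l$ works just as well.
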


\begin{proof}
  The coefficients of $T_l(x)$ in~(\ref{Tpower}) are clearly bounded above in absolute value by the corresponding coefficients of the polynomial $(qx^3+qx^2+qx+q)^l$. Since the sum of all coefficients of this polynomial is $(4q)^l$ and all coefficients are positive, each individual coefficient is bounded by $(4q)^l$. This proves~(\ref{coeffbound}).
  We now show the first part. To begin with, observe that $c_0=m_0^l>0$ and $c_1=-lm_1m_0^{l-1}$ which is negative for all $m_1>0$. Suppose now that $2\leq i\leq 3l$ and consider
  the coefficient of $x^i$ in
  \begin{equation}\label{Tl}
    T_l(x)=(m_3x^3+m_2x^2+m_0)^l+r(x),
  \end{equation}
  where 
  \begin{align*}  
    r(x) &=\sum_{j=1}^l \binom{l}{j}\left(-m_1x\right)^j \left(m_3x^3+m_2x^2+m_0\right)^{l-j}\\
    &=\sum_{j=1}^{3l-2} d_j x^j.
  \end{align*} 
  First, consider the first summand in~(\ref{Tl}).
   Since $m_0, m_2, m_3 \geq 1$ the coefficient of $x^i$ in the expansion of $(m_3x^3+m_2x^2+m_0)^l$ is $\geq 1$. Note also that all the powers $x^2, x^3, \ldots, x^{3l}$ appear in the expansion of this term due to the fact that every $i\geq 2$ allows at least one representation as $i=3i_1+2i_2$ with non-negative integers $i_1, i_2$. We now want to show that for sufficiently small $m_1>0$ the coefficient of $x^i$ in the first summand in~(\ref{Tl}) is dominant. To this end, we assume $m_1<1$ so that $m_1>m_1^j$ for $2\leq j \leq l$. Using $\binom{l}{j}<2^l$ and a similar reasoning as above we get that
 $$|d_j|< l 2^l m_1 (3q)^l= l\left(6q\right)^l m_1,\qquad 1\leq j\leq 3l-2.$$
 This means that if $m_1< l^{-1} (6q)^{-l}$ then the powers $x^2,\ldots, x^{3l}$ in the polynomial $T_l(x)$ indeed have positive coefficients. This finishes the proof.
\end{proof}

To proceed we recall the following splitting formulas for $s_q$ which are simple consequences of the $q$-additivity of the function $s_q$~(see~\cite{HLS11} for the proofs).

\begin{prop}\label{split}
  For $1\leq b<q^k$ and $a,k\geq 1$, we have
  \begin{align*}
    s_q(aq^k+b)&=s_q(a)+s_q(b),\\
    s_q(aq^k-b)&=s_q(a-1)+k(q-1)-s_q(b-1).	
  \end{align*} 
\end{prop}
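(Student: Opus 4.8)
The plan is to derive both identities directly from the base-$q$ digit expansion, exploiting the fact that $s_q$ is $q$-additive: the digit sum of a number whose $q$-ary representation decomposes into two non-overlapping blocks of digits is the sum of the digit sums of the two blocks. Everything reduces to checking that certain additions and subtractions produce no carries or borrows, so that the resulting coefficient strings are genuine $q$-ary expansions.

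First I would prove the additive identity $s_q(aq^k+b)=s_q(a)+s_q(b)$. Writing $b=\sum_{i=0}^{k-1}b_iq^i$ with $0\le b_i<q$ (legitimate because $b<q^k$) and $a=\sum_{j\ge0}a_jq^j$ with $0\le a_j<q$, I would observe that
$$aq^k+b=\sum_{i=0}^{k-1}b_iq^i+\sum_{j\ge0}a_jq^{j+k}$$
is already a valid $q$-ary expansion: the powers $q^0,\dots,q^{k-1}$ carry the digits of $b$ and the powers $q^k,q^{k+1},\dots$ carry the digits of $a$, with no overlap and every coefficient a legitimate digit. Summing the digits gives $s_q(a)+s_q(b)$ at once. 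I would also record that this remains (trivially) true for $a=0$, since $s_q(0)=0$, as I shall need that case below.

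The engine for the second identity is the complement identity $s_q(q^k-1-c)=k(q-1)-s_q(c)$, valid for $0\le c<q^k$. Writing $c=\sum_{i=0}^{k-1}c_iq^i$ and $q^k-1=\sum_{i=0}^{k-1}(q-1)q^i$, I would subtract digit by digit: since $0\le c_i\le q-1$ we have $0\le q-1-c_i\le q-1$, so $q^k-1-c=\sum_{i=0}^{k-1}(q-1-c_i)q^i$ is again a valid expansion with no borrows, and summing yields $k(q-1)-s_q(c)$.

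Finally I would assemble the two ingredients. Rewriting $aq^k-b=(a-1)q^k+(q^k-b)$ and noting that the hypothesis $1\le b<q^k$ forces $1\le q^k-b<q^k$ and $0\le b-1<q^k$, I apply the additive identity (with $a-1\ge0$ in the role of $a$, which is why I need the possibly-zero case) and then the complement identity to $q^k-b=(q^k-1)-(b-1)$:
$$s_q(aq^k-b)=s_q(a-1)+s_q(q^k-b)=s_q(a-1)+k(q-1)-s_q(b-1).$$
There is no substantive obstacle here; the only points requiring care are the edge cases, namely verifying that the ranges $1\le q^k-b<q^k$ and $0\le b-1<q^k$ place the arguments precisely in the regimes where the additive and complement identities apply, and that the subtraction involves no borrow past the $k$-th digit, which is exactly what the bound $b<q^k$ guarantees.
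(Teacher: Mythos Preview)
Your argument is correct and is precisely the standard one: the paper itself does not spell out a proof but merely remarks that both identities are simple consequences of the $q$-additivity of $s_q$ and refers to~\cite{HLS11}, which is exactly the mechanism you exploit. Your handling of the edge cases (allowing $a-1=0$ in the first identity and checking the ranges $1\le q^k-b<q^k$, $0\le b-1<q^k$) is careful and complete.
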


We now turn to the proof of Theorem~\ref{mtheo}. To clarify the construction we consider first the simpler case of monomials, $$p(x)=x^h, \qquad h\geq 1.$$ 
(We here include the cases $h=1$ and $h=2$ because we will need them to deal with general polynomials with linear and quadratic terms.) Let $u\geq 1$ and multiply $t(x)$ in~(\ref{goodt}) by $q^{u-1}$. Lemma~\ref{lemtechn} then shows that for all \textit{integers} $m_0, m_1, m_2, m_3$ with 
\begin{equation}\label{admissible}
  q^{u-1}\leq m_0,m_2,m_3<q^u,\qquad 1\leq m_1 <q^u/(h q(6q)^h),
\end{equation}  
the polynomial $T_h(x)=(t(x))^h=p(t(x))$ has all positive (\textit{integral}) coefficients with the only exception of the coefficient of $x^1$ which is negative. Let $u$ be an integer such that 
\begin{equation}\label{uinterval}
  q^u\geq 2 hq(6q)^h
\end{equation}  
and let $k\in \mathbb{Z}$ be such that 
\begin{equation}\label{klarge}
  k>hu+2h.
\end{equation}
For all $u$ with~(\ref{uinterval}) the interval for $m_1$ in~(\ref{admissible}) is non-empty. Furthermore, relation~(\ref{klarge}) implies by~(\ref{coeffbound}) that $$q^k>q^{hu}\cdot q^{2h}\geq(4q^u)^h>|c_i|, \qquad \mbox{for all } i=0,1,\ldots,3h,$$ where $c_i$ here denotes the coefficient of $x^i$ in $T_h(x)$. Roughly speaking, the use of a large power of $q$ (i.e. $q^k$ with $k$ that satisfies~(\ref{klarge})) is motivated by the simple wish to split the digital structure of the $h$-power according to Proposition~\ref{split}. By doing so, we avoid to have to deal with carries when adding terms in the expansion in base $q$ since the appearing terms will not interfere. We also remark that this is the point where we get the dependency of $h$ in the lower bound of Theorem~\ref{mtheo}. 

Now, by $c_2, |c_1|\geq 1$ and the successive use of Proposition~\ref{split} we get
\begin{align}
  s_q(t(q^k)^h)&=s_q\left(\sum_{i=3}^{3h} c_i q^{ik} +c_2 q^{2k}-|c_1| q^{k}+c_0\right)\nonumber\\\nonumber
  &=s_q\left(\sum_{i=3}^{3h} c_i q^{(i-1)k} +c_2 q^{k}-|c_1|\right) +s_q(c_0)\\\nonumber
  &=s_q\left(\sum_{i=3}^{3h} c_i q^{(i-3)k}\right) +s_q(c_2-1)+k(q-1)-s_q(|c_1|-1)+s_q(c_0)\\\nonumber
  &=\sum_{i=3}^{3h} s_q(c_i)  +s_q(c_2-1)+k(q-1)-s_q(|c_1|-1)+s_q(c_0)\\
  &=k(q-1)+M,\label{Msimple}
\end{align}
where we write
$$M=\sum_{i=3}^{3h} s_q(c_i)  +s_q(c_2-1)-s_q(|c_1|-1)+s_q(c_0).$$
Note that $M$ is an integer that depends (in some rather obscure way) on the quantities $m_0, m_1, m_2, m_3$. Once we fix a quadruple
$(m_0,m_1,m_2,m_3)$ in the ranges~(\ref{admissible}), the quantity $M$ does not depend
on $k$ and is constant whenever $k$ satisfies~(\ref{klarge}). We now exploit the appearance of the single summand $k(q-1)$ in~(\ref{Msimple}). Since by assumption $(m,q-1)=1$, we find that
\begin{equation}\label{residuesystem}
  s_q(t(q^k)^h),\qquad \mbox{for } k=hu+2h+1,\; hu+2h+2,\; \ldots,\; hu+2h+m,
\end{equation}
runs through a complete set of residues mod $m$. Hence, in any case,
we hit a fixed arithmetic progression mod $m$ (which might be altered by $M$) for some $k$ with $hu+2h+1\leq k\leq hu+2h+m$.

Summing up, for $u$ with~(\ref{uinterval}) and by~(\ref{admissible}) we find at least 
\begin{equation}\label{atleast}
  (q^{u}-q^{u-1})^3(q^u/(hq(6q)^h)-1)\geq \frac{\left(1-1/q\right)^3}{2hq\left(6q\right)^h} \; q^{4u}
\end{equation}
integers $n$ that in turn by~(\ref{goodt}),~(\ref{admissible}),~(\ref{klarge}) and~(\ref{residuesystem}) are all smaller than
$$
q^u\cdot q^{3(hu+2h+m)}=q^{3(2h+m)} \cdot q^{u(3h+1)}$$
and satisfy $s_q(n^h)\equiv g \bmod m$ for fixed $g$ and $m$. By our construction and by choosing $k>hu+2h>u$ all these integers are distinct.
We denote 
$$ N_0 =N_0(q,m,p)=q^{3(2h+m)}\cdot q^{u_0(3h+1)},$$
where
$$ u_0 =\left \lceil \log_q\left(2hq(6q)^h\right)\right \rceil \leq \log_q\left(2hq^2(6q)^h\right).$$
Then for all $N\geq N_0$ we find $u\geq u_0$ with 
\begin{equation}\label{rangeNu}
  q^{3(2h+m)}\cdot q^{u(3h+1)}\leq N <q^{3(2h+m)}\cdot q^{(u+1)(3h+1)}.
\end{equation}
By~(\ref{atleast}) and~(\ref{rangeNu}), and using $(1-1/q)^3\geq 1/8$ for $q\geq 2$, we find at least
$$\frac{\left(1-1/q\right)^3}{2hq\left(6q\right)^h} \; q^{4u}\geq \left(16 hq^5 \left(6q\right)^h\cdot q^{(24h+12m)/(3h+1)}\right)^{-1}\; N^{4/(3h+1)}$$
integers $n $ with $0\leq n<N$ and $s_q(n^h)\equiv g \bmod m$. We therefore get the statement of Theorem~\ref{mtheo} for the case of monomials $p(x)=x^h$ with $h\geq 3$. The estimates are also valid for $h=1$ and $h=2$.

\bigskip

The general case of a polynomial $p(x)=a_h x^h+\cdots+a_0$ of degree $h\geq 3$ (or, more generally, of degree $h\geq 1$) follows easily from what we have already proven. Without loss of generality
we may assume that all coefficients $a_i$, $0\leq i\leq h$, are positive, since otherwise there exists $e=e(p)$ depending only on $p$ such that
$p(x+e)$ has all positive coefficients. Note that a finite translation can be dealt with choosing $C$ and $N_0$ appropriately in the statement. Since Lemma~\ref{lemtechn} holds for all $l\geq 1$ and all negative coefficients are found at the same power $x^1$, we have that the polynomial $p(t(x))$ has again all positive coefficients \textit{but one} where the negative coefficient again corresponds to the power $x^1$. It is then sufficient to suppose that
$$ k>hu+2h+\log_q \max_{0\leq i\leq h}{a_i}$$
in order to split the digital structure of $p(t(q^k))$. In fact, this implies that $$q^k>\left(\max\limits_{0\leq i\leq h}{a_i}\right)\cdot \left(4 q^u\right)^h,$$ and exactly the same reasoning as before yields $\gg_{q,p} q^{4u}$ distinct positive integers that are $\ll_{q,m,p} q^{u(3h+1)}$ and satisfy 
$s_q(p(n))\equiv g \bmod m$. This completes the proof of Theorem~\ref{mtheo}.

\subsection*{Acknowledgements}
This research was supported by the Agence Nationale de la Recherche, grant ANR-10-BLAN 0103 MUNUM.

\end{document}